\newtheorem{theorem}{Theorem}
\newtheorem{proposition}{Proposition}
\newtheorem{lemma}{Lemma}
\newtheorem{corollary}{Corollary}
\numberwithin{equation}{section}
\newcommand\E{\mathbb{E}}
\newcommand\F{\mathcal{F}}
\newcommand\X{\mathcal{X}}
\newcommand\T{\mathcal{T}}
\newcommand\N{\mathcal{N}}
\newcommand\Leq{\preceq}
\newcommand\x{\bm{x}}
\newcommand\y{\bm{y}}
\newcommand\e{\bm{e}} 
\newcommand\Exp[1]{\mathsf{Exp}\left(#1 \right)}
\newcommand\two{\{1,2\}} 
\newcommand{\revision}[1]{\textcolor{blue}{#1}}
\renewcommand{\revision}[1]{#1}
\newcommand{\arxiv}[1]{{\leavevmode\color{red}#1}}
\renewcommand{\arxiv}[1]{#1}
\newcommand{\journal}[1]{}
\newcommand\defeq{:=} 
\newcommand\q{\bm{q}} 
\newcommand\p{\bm{p}} 
\newcommand\s{\bm{s}} 
\newcommand\w{\bm{w}} 
\newcommand\Z{\mathbb{Z}} 
\newcommand\deq{{\buildrel d \over =}} 
\newcommand{\I}[1]{\mathbb{I}_{\left\{#1 \right\}}}
\renewcommand{\P}[1]{\mathbb{P}\left(#1 \right)}
\newcommand{\Ppi}[1]{\mathbb{P}_\pi\left(#1 \right)}
\newcommand{\TV}[2]{d_{\mathrm{TV}}\left(#1,#2\right)}
\newcommand\R{\mathbb{R}}
\renewcommand\wp{\mathrm{w.p.}}
\newcommand\tvratet{O \left( \frac{1}{(1-\rho)^3} \frac{1}{t} \right)}
\newcommand\lengthrate{O\left(
\frac{
1
}{(1- \rho)^3}
\frac{1}{\sqrt{t}} \right)}
\newcommand{\set}[1]{\left\{ #1 \right\}}
\renewcommand{\mc}[1]{\mathcal{#1}}
\DeclarePairedDelimiter\ceil{\lceil}{\rceil}
\newcommand{\shorten}{\@ifnextchar\bgroup{\@shorten}{\@shorten}}
\long\def\@shorten#1{\begingroup\color{red}#1\endgroup}
\begin{document}


\TITLE{Convergence Rate   of the Join-the-Shortest-Queue System} 

\ARTICLEAUTHORS{%
	\AUTHOR{Yuanzhe Ma}
	\AFF{Department of Industrial Engineering and Operations Research, Columbia University, \texttt{ym2865@columbia.edu}}
	\AUTHOR{Siva Theja Maguluri}
	\AFF{Department of Industrial and Systems Engineering, Georgia Institute of Technology, \texttt{siva.theja@gatech.edu}} }

\ABSTRACT{
The Join-the-Shortest-Queue (JSQ) policy is among the most widely used 
load balancing algorithms
and has been extensively studied. 
However, an
exact 
characterization of the system
behavior remains challenging.
Most prior research has focused on analyzing its performance in 
the steady state 
in certain asymptotic regimes,
such as the heavy-traffic regime.  
However,   convergence 
to the steady state in these regimes is often slow,  so steady-state and heavy-traffic characterizations may be less informative over practical time horizons.
To address this limitation, we provide a
finite-time convergence rate analysis of a JSQ system 
with two symmetric servers. 
In sharp contrast to the existing literature, we directly study the original system rather than  an approximate limiting system such as a diffusion approximation. 
Our results demonstrate that for such a system, 
the convergence rate to its steady state, measured in the total variation distance,
is   
$\tvratet$, 
where $\rho \in (0,1)$
is the traffic intensity.
}

\KEYWORDS{Join-the-Shortest-Queue; Load Balancing System; Transient Analysis; Coupling}


\maketitle

 \section{Introduction} \label{sec:intro}

Efficient load balancing is central
to
optimizing resource allocation and minimizing job delays in 
modern service systems and data centers.
The Join-the-Shortest-Queue (JSQ) policy is
a popular 
load balancing algorithm
due to its simplicity and strong performance
under various criteria.
Since its introduction by~\citet{Winston77},
it has received significant attention over  the decades. 
However, the analysis of the JSQ system remains highly nontrivial~\citep{load_balancing_survey}.
Due to the complexity of the problem, most
prior works    consider a JSQ system 
only in   steady state.
Further,    
researchers typically conduct analyses in some 
asymptotic regimes,
such as  the heavy-traffic regime.
For example, there are various bounds on
the queue length of a JSQ system in
its steady state, such as~\cite[Theorem 10.2.3]{SrikantYi14}.
However,  
traffic patterns in modern
data centers tend to be time-varying~\cite{datacenter_traffic}.
This necessitates
a careful finite-time analysis of JSQ performance, 
rather than relying only on steady-state behavior.
Motivated by this,   
we focus on analyzing the \textit{transient behavior} 
of a JSQ system.
As a starting point,  we consider
a system consisting of two servers with 
both interarrival and service times
following exponential distributions.
We analyze its non-asymptotic convergence rate toward its steady state.

As discussed, prior works focus on analyzing the JSQ policy under the heavy-traffic regime and in its steady state. \arxiv{However, unsurprisingly,
simulation results in Figure~\ref{fig:JSQ-sim} reveal that the convergence rate of a JSQ system can be quite slow when the traffic intensity  
$\rho$
approaches 1.
Thus, to conduct
 more careful analysis
of the JSQ systems, 
we need to understand their transient behavior.}Many prior results rely on state space collapse phenomena in heavy traffic and do not directly yield explicit finite-time bounds for light traffic.
In this paper, we 
address this challenge
and obtain an explicit  upper
bound on the  
difference between the transient 
distribution of a JSQ system and its steady state for any 
traffic intensity
$\rho \in (0,1)$.

\arxiv{
\begin{figure}[htbp]
\centering
\begin{minipage}[b]{0.45\textwidth}
\centering
\includegraphics[width=\textwidth]{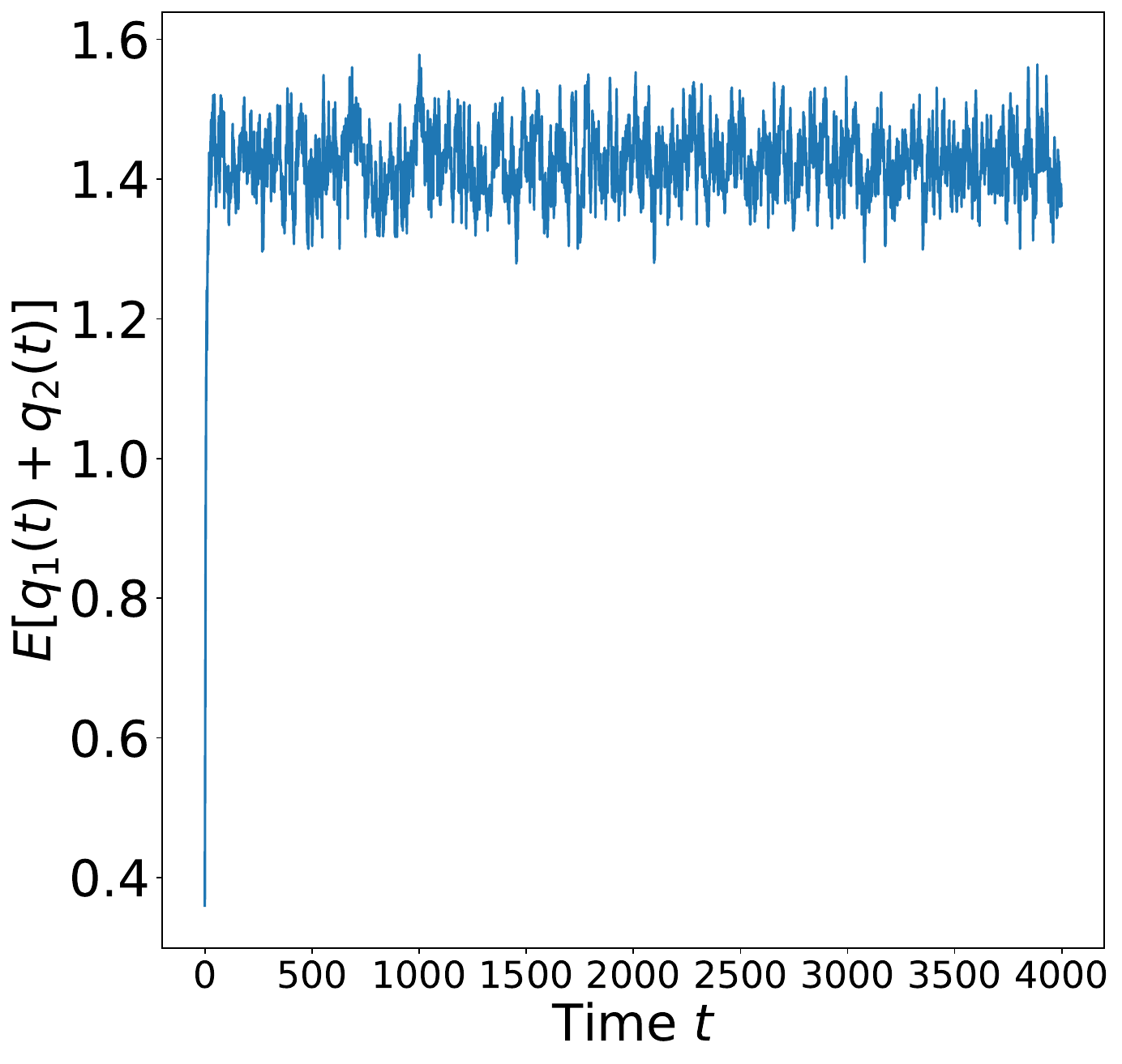}
\end{minipage}
\hfill
\begin{minipage}[b]{0.45\textwidth}
\centering
\includegraphics[width=\textwidth]{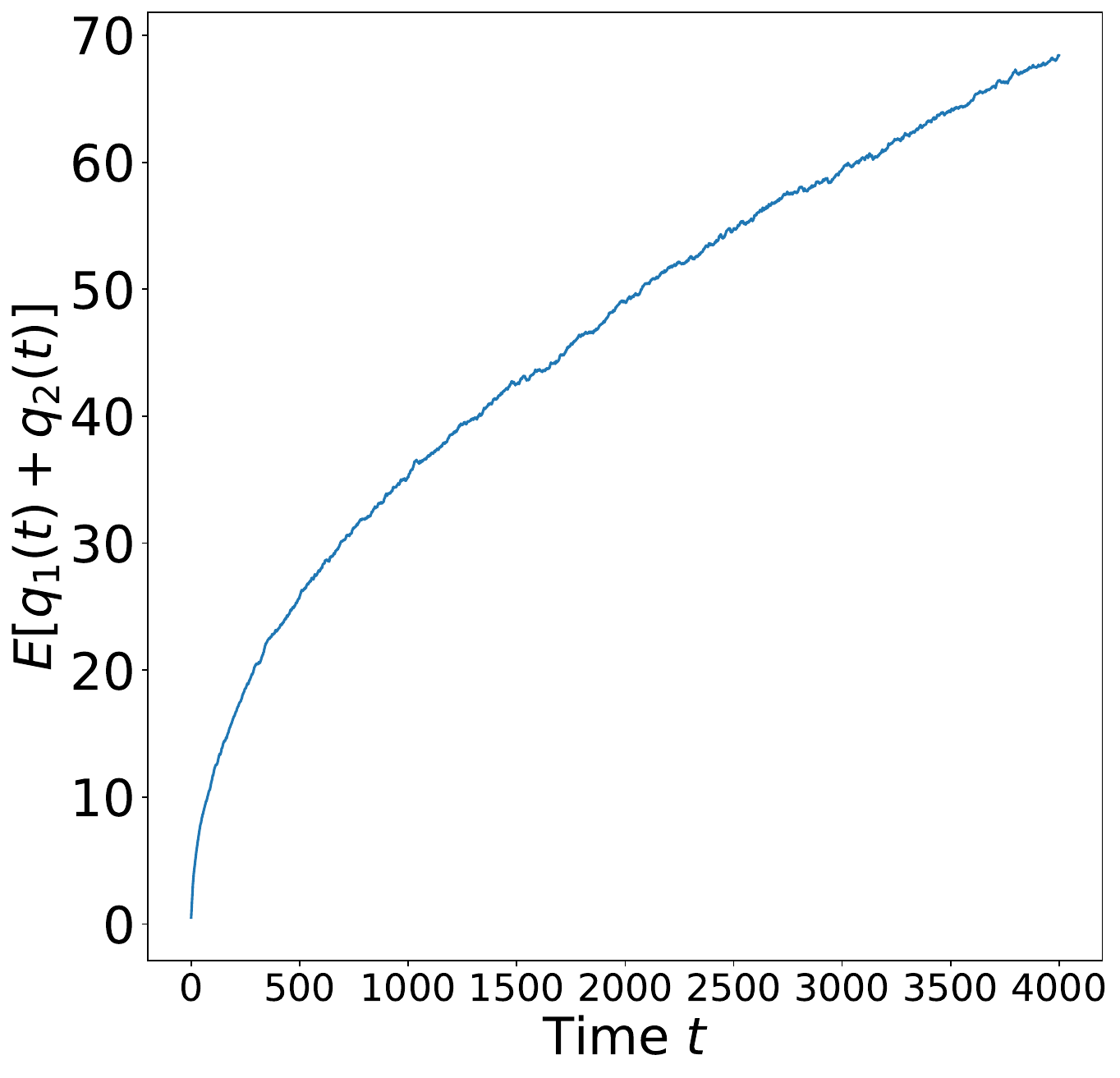}
\end{minipage}
\caption{Finite-time performance of a JSQ system at different values of $\rho$, where  $q_i(t)$  denotes the   queue length
at time $t$ for server $i$.
The results are based 
on 1000 simulations of trajectories 
to estimate the expected total queue length.
We consider a JSQ system with a Poisson arrival process of rate $\lambda = \rho$ and two identical servers, each with exponentially distributed service times of rate $\mu = 0.5$. Results are shown for $\rho = 0.5$ (left) and $\rho = 0.999$ (right).
The pair of plots clearly shows
that the convergence rates of JSQ systems heavily depend on $\rho$.
}
\label{fig:JSQ-sim}
\end{figure}
}

We focus  on analyzing the  
distribution $\mathbb{P}^t_{\x}$
of the queue length vector
at time $t$
for a JSQ system with an
initial queue length vector $\x$.
In particular,
we consider its total variation (TV) distance
to the steady state $\pi$ at every time $t \ge 0$.
The total variation  distance is 
a common metric quantifying the 
convergence rate of 
a stochastic system
and 
is
known to be closely connected to  many other metrics. 
For example, bounds on TV distances imply bounds on 
  Hellinger distances.
In addition to the TV distance, we derive
the convergence rate  of the mean     queue length.
In our coupling approach,
the key challenge  is to 
obtain an upper bound on the expected hitting times  or  
first passage times of the JSQ system.
For general continuous-time Markov chains (CTMCs), 
analyzing the hitting time
usually   requires
  extensive exploitation  of 
the structure of the  CTMC~\citep{DiCrescenzoMa08}.
We use several structural properties  of the JSQ system
and show
that the problem can be reduced to providing an upper bound on $\E[T_0]$, 
where $T_0$, to be 
defined in~\eqref{eqn:T0-def}, is the time it takes for 
a JSQ system's queue length vector
to hit the origin starting from state $(1,0)$. 

\subsection{Main contribution}

Our main result is for a continuous-time JSQ system with
two symmetric queues, where
both interarrival and service times 
follow
exponential distributions. 
We derive an upper bound on  $\TV{\mathbb{P}^t_{\x}}{\pi}$
and, as a corollary, an upper bound 
on the difference between the mean queue length at time 
$t$ and its steady-state value.
These two   quantities  converge  to zero 
as $t \to \infty$ for any 
traffic intensity
$\rho \in (0,1)$. 
In particular, for the TV distance, 
we establish 
in   Theorem~\ref{thm:main} 
that the convergence rate is $\tvratet.$
In contrast to prior work   based on
diffusion limits, we derive transient bounds by directly   studying the original JSQ system, 
thus   avoiding approximation errors.
To the best of our knowledge, we are the first to provide 
a convergence rate analysis
of a JSQ system for any $\rho \in (0,1)$.

The rest of the paper is organized as follows.
We first discuss related work in Section~\ref{sec:related-work} 
and notation in Section~\ref{sec:notation}.
We present the model and state 
our main result
in Section~\ref{sec:model-and-result}.
We provide all   proofs in Section~\ref{sec:proof-of-thm}.
Finally, we provide concluding remarks and promising future directions in Section~\ref{sec:conclusion}.

\subsection{Related work}
\label{sec:related-work}
 
Since the pioneering work of~\citet{Winston77}, 
the JSQ policy 
has been extensively studied in the literature~\citep{Conolly84,load_balancing_survey}.
Transient analyses
are generally 
complicated, 
even for simple systems such as $M/M/1$ queues~\cite{Takacs62,AbateWh87}
and $M/M/n$ queues~\cite{GamarnikGo13}.
There are   numerical results for
transient analysis of JSQ systems~\cite{Grassmann80}, 
but few theoretical results are known regarding their convergence rates.    
In this work, we analyze the transient behavior through a hitting time analysis 
(see, e.g.,~\cite{LevinPe17}). 
Though
there are some   results related to   
hitting times or first passage times for 
JSQ systems~\citep{YaoKn08,Tibi19},    
they cannot be used directly
 for our problem. 
For a simple system such as an $M/M/1$ system, 
one can obtain
the
moment generating function (MGF)
  of the hitting time for the queue length
  to hit state $0$   
 using a construction of an exponential martingale and the optional stopping theorem~\cite[Proposition 5.4]{Robert13}. 
One of the key reasons this works is     the special property of an $M/M/1$ queue: starting from state $x \ge 1$, the queue length must first hit states 
$x-1,x-2,\cdots,1$ before hitting state 0. 
However, this property is not true for a JSQ system. 
For example, consider a JSQ system with 
an initial queue length
$(x,y)$, where $x,y \ge 1$.
It is possible that the queue length vector never hits state $(x-1,y-1)$ before hitting $(0,0)$. 
This is one of the technical barriers 
to analyzing a JSQ system.
One possible approach is to use the exponential martingale constructed in~\cite{Tibi19};
however, that work focuses on the hitting time to $(K,K)$, $K \ge 1$, 
whereas our interest is in the hitting time to $(0,0)$, which is fundamentally different, making those results inapplicable.

 The convergence rates of stochastic systems 
 and hitting time analyses based on 
 Foster–Lyapunov drift functions have been extensively studied in the literature; see, e.g.,~\cite{DownMeTw95}.
For example,  
Theorem 2 of~\cite{LundMeTw96}
provides a  general framework for obtaining 
exponential convergence rates of 
general stochastic systems.  
However, applying it requires a suitable drift function, 
and it is unclear how to construct one for the JSQ system.
 Relatedly, 
 researchers have discussed
 transient analysis for queues
under various 
limiting regimes or for 
simpler systems~\cite{vanLeeuwaardenKn11, GamarnikGo13}.   
In contrast to these works,
we   analyze the JSQ system in a 
fixed system setting.

\subsection{Notation}
\label{sec:notation}

In this section, we introduce the notation that will be used 
throughout this paper. 
We use $\P{A}$ to denote the probability that event $A$ occurs, 
and use $\E[X]$ to denote the expected value of the random variable $X$.
For a vector $\x$, we use $x_i$ to denote its $i$-th element.
Given two vectors $\x$ and $\y$ in $\R^n$, we write $\x \geq \y$ if $x_i \geq y_i$ for every $i \in \{1,\ldots,n\}$. 
For a stochastic process $(X_t)_{t \geq 0}$ with 
steady state $\pi$, we use
$\E_\pi[X]$ 
and $\mathbb{P}_\pi(\cdot)$
to denote the expectation and   distribution
of $X_t$ with respect to the steady state. 
For two random variables $A$ and $B$, we write that 
$A$ and $B$  are
  equal   in
distribution, or $A \deq B$, if $\P{A \leq x} = \P{B \leq x}$ for all $x \in \R$. We define $A$ to be  stochastically dominated by $B$, or $A\preceq B$, if and only if $\P{A > x} \leq \P{B > x}$ for all $x \in \R$.
For two probability distributions 
$\mathbb{P}$ and $\mathbb{Q}$ defined on the sigma-algebra $\F$, we denote the total variation distance between $\mathbb{P}$ and $\mathbb{Q}$ as
$\TV{\mathbb{P}}{\mathbb{Q}} = \sup_{A \in \F} |\mathbb{P}(A) - \mathbb{Q}(A)|.$

\section{Model and results}
\label{sec:model-and-result}

\subsection{Model}
\label{sec:model}

 Throughout the paper,
we consider a 
continuous-time
load balancing system operating under the  JSQ
policy.
It consists of two single-server queues,
and each has an infinite buffer size.
Jobs arrive
according to a  Poisson process with rate $\lambda$,
and   service times for  each queue are independent exponential random variables
with mean $\frac{1}{\mu}$. 
We use $q_i(t), i \in \two$ to denote the number of jobs in the $i$-th queue at 
time $t$. 
Let $\q(t) \in \Z_+^2 = \set{0,1,2,3\cdots}^2$ be the queue length process with elements $(q_1(t),q_2(t))$. 
The JSQ policy  
sends each arriving job to the queue
with 
the fewest
jobs, and picks
the one with the smaller index when there is a tie. 
We define 
$i^\star(\bm{q}) = \min\set{ \underset{i \in \two}{\argmin}   \set{q_i} }.$
The queue length process $\{\q(t): t \geq 0 \}$ is a  CTMC with generator matrix $G$ defined as 
\begin{align*}
G_{\q,\q'}  =  
\begin{cases}
\mu  & \mathrm{if~} q_i > 0 \mathrm{~and~} \q' = \q - \e^{(i)} \mathrm{~for~}  i \in \two, \\  
 \lambda  & \mathrm{if~}  \q' = \q + \e^{(i^\star(\q))},    \\  
-(\lambda  + \mu \sum_{i=1}^2 \I{q_i > 0} )  & \mathrm{if~} \q=\q',   \\
 0 & \mathrm{otherwise,}   \\  
 \end{cases}
\end{align*} 
where $\e^{(i)}$ is the $i$-th unit vector. 
 

\subsection{Main result}
 
We now present the main result  of this paper:
the convergence rate for the
 JSQ system   described in  
 Section \ref{sec:model}.


 \begin{theorem} 
\label{thm:main}
Consider a JSQ system    as  described in Section~\ref{sec:model}
with 
$\rho = \frac{\lambda}{2\mu} < 1$.
Let its initial queue length vector be   $\x$,
and
let 
$\mathbb{P}_{\x}^t$ be the  distribution 
of its queue length vector $\q(t)$
at time $t$, and let $\pi$ be its steady state. 
Then, for all $t > 0$,
\begin{align} 
\label{eqn:TV-bound}
\TV{\mathbb{P}_{\x}^t}{\pi} \le
\frac{1}{t} C_1(\lambda,\mu,\x),
 \end{align}
 where 
 \begin{align} 
\label{eq:definition of C1}
C_1(\lambda,\mu,\x) = \frac{2-\rho}{2 \mu (1-\rho)^3}  \Big( 
(x_1+x_2)(1-\rho) +  \rho(2 - \rho)
\Big).
\end{align}
 Further, define 
\begin{align} 
\label{eqn:def-K}
K(\rho) 
= \begin{cases}
\min\set{1, 
\frac{2+ 2\rho}{1-2\rho^2} 
\cdot \frac{(1-\rho)^2} {2- \rho}},  & \mathrm{~if~} \rho \in[0,  \frac{1}{\sqrt{2}}), \\
1,  & \mathrm{~if~}  \rho \in [\frac{1}{\sqrt{2}},1),
\end{cases}
\end{align}
and let
 \begin{align} 
\label{eq:definition of C1-K}
C_1^K(\lambda,\mu,\x) = C_1(\lambda,\mu,\x)   \cdot K(\rho).
\end{align}
Then, for all $t > 0$,
\begin{align} 
\label{eqn:TV-bound-stronger}
\TV{\mathbb{P}_{\x}^t}{\pi} \le
\frac{1}{t} C_1^K(\lambda,\mu,\x).
 \end{align}
 \end{theorem}

\begin{figure}[h!]
\centering
\begin{subfigure}[b]{0.45\textwidth}
\centering
\includegraphics[height=6cm]{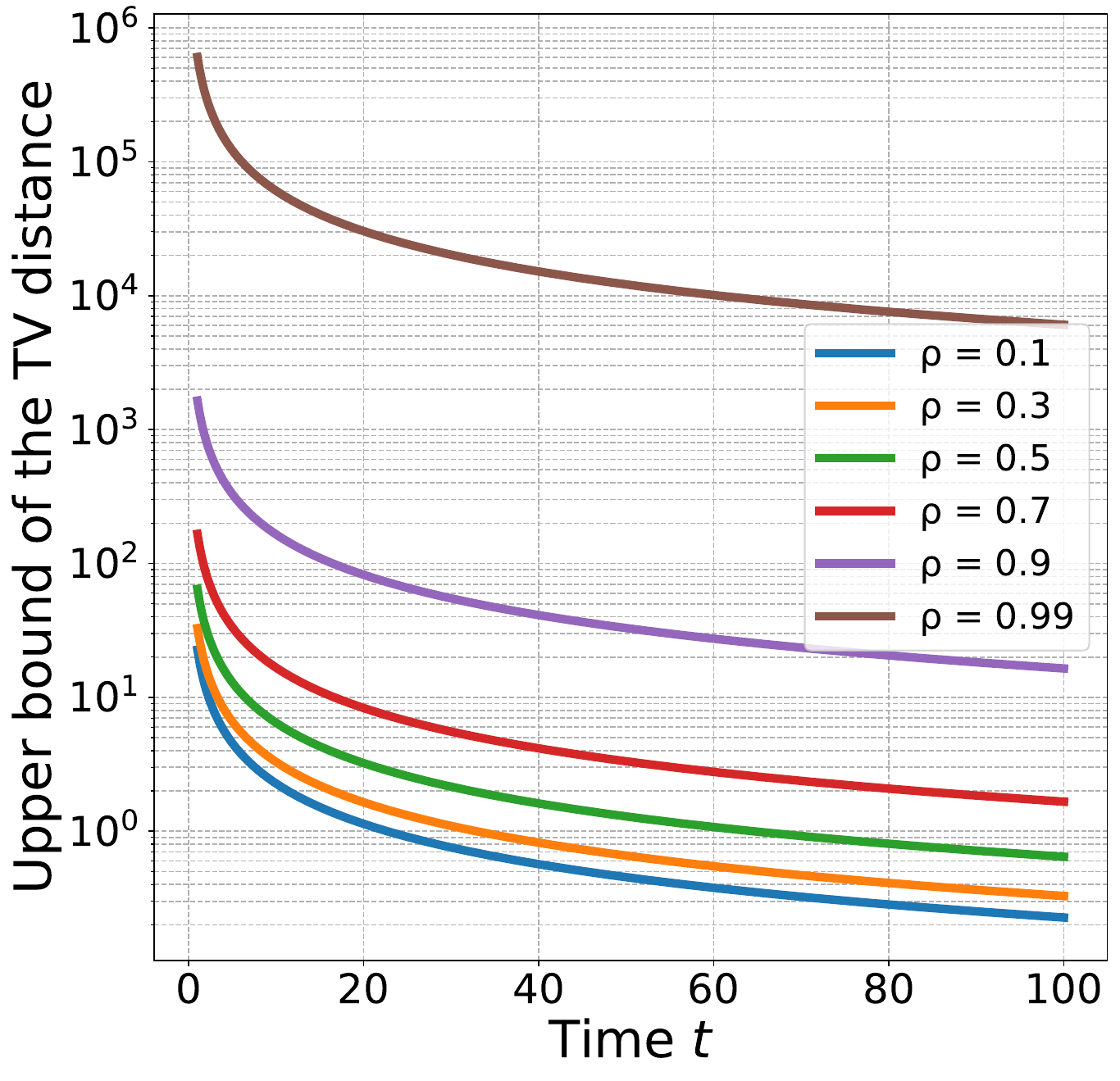}   
\end{subfigure}
\hfill
\begin{subfigure}[b]{0.45\textwidth}
\centering
\includegraphics[height=6cm]{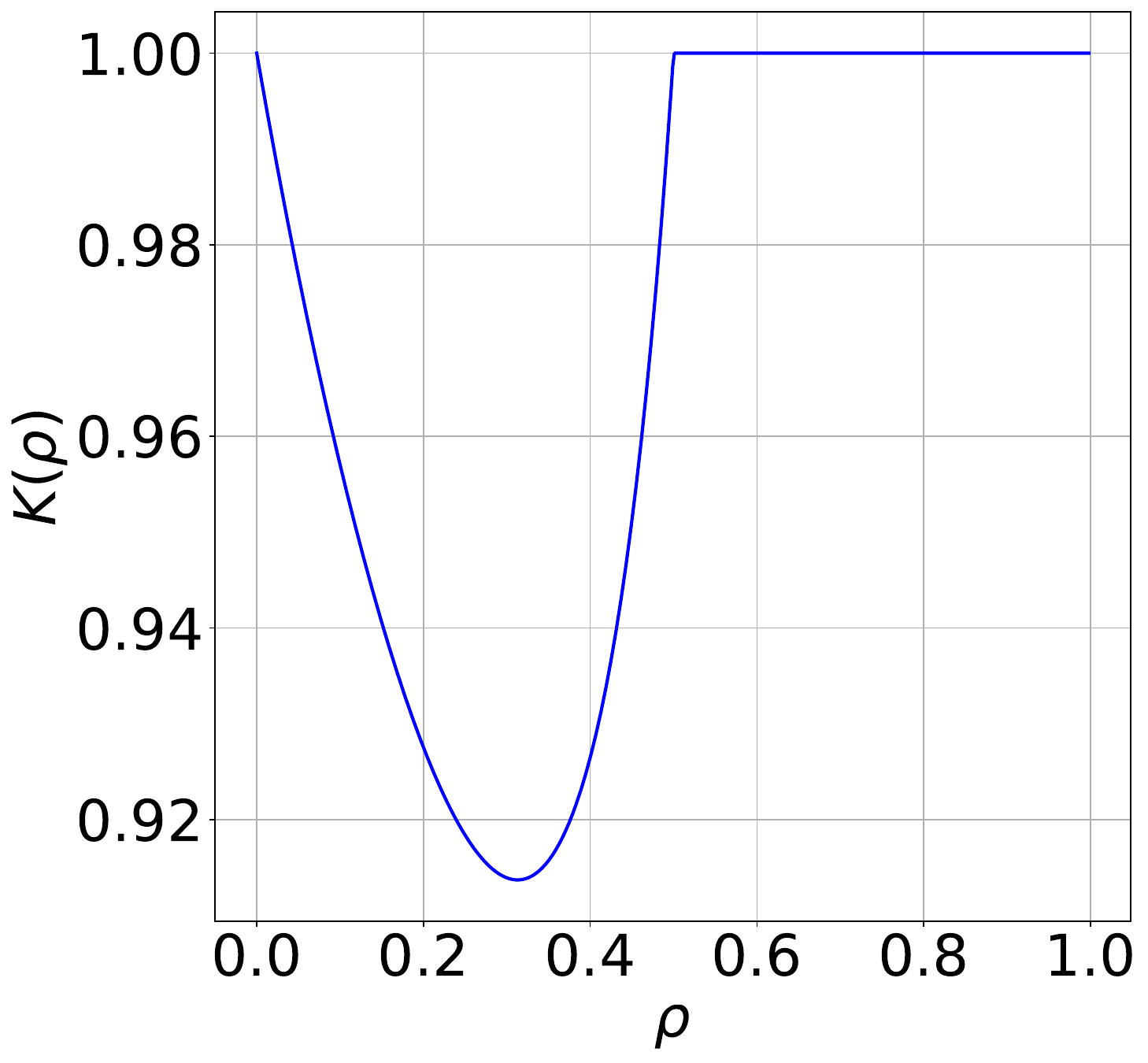}   
\end{subfigure}
\caption{Left: plot of the bound~\eqref{eqn:TV-bound-stronger} as a function of $t$
for different values of $\rho$
and $x_1= x_2=10, \mu  = 1$.
Right: plot of $K(\rho)$~\eqref{eqn:def-K} as a function of $\rho$.}
\label{fig:bound}
\end{figure}


 We now visualize the upper bound~\eqref{eqn:TV-bound-stronger} 
 and the value of $K(\rho)$ \eqref{eqn:def-K} 
in Figure~\ref{fig:bound}.
As we observe, the right-hand side of~\eqref{eqn:TV-bound-stronger}
varies
significantly  across $\rho$, 
though its dependence on $t$ is the same for all $\rho$.

Next, in Corollary~\ref{coro-moment-difference}, we establish a convergence rate result
for the difference between the transient and steady-state mean queue lengths.
In particular, the bound~\eqref{eqn:moment-diff-bound}
implies that the mean queue length converges at 
rate $\lengthrate$; 
the proof is provided in Section \ref{sec:proof-coro}.
Combining this corollary with existing results on the stationary queue length of the JSQ system, such as Proposition~4 of~\cite{Dester17}, 
yields an explicit transient upper bound on the mean queue length, as stated in~\eqref{eqn:moment-bound}.


\begin{corollary} 
\label{coro-moment-difference}
 Consider a JSQ system    as  described in Theorem~\ref{thm:main} with
 $\rho = \frac{\lambda}{2\mu} < 1$
 and an 
 initial queue length vector     $\x$.
Then
\begin{align}
|\E[q_1(t)+q_2(t)] - \E_\pi[q_1 + q_2]|
\le \frac{1}{\sqrt{t}}  \Big(2C_1^K(\lambda,\mu,\x) + C_2(\lambda,\mu,\x)   \Big) + \frac{2}{t} C_1^K(\lambda,\mu,\x)
\label{eqn:moment-diff-bound}
\end{align}
and
\begin{align}
\E[q_1(t)+q_2(t)]  \leq 
\frac{\rho(2 - \rho)}{1-\rho} +\frac{1}{\sqrt{t}}  \Big(2C_1^K(\lambda,\mu,\x) + C_2(\lambda,\mu,\x)  
\Big) + \frac{2}{t} C_1^K(\lambda,\mu,\x),
\label{eqn:moment-bound}
\end{align}

where $C_1^K(\lambda,\mu,\x)$ is defined in~\eqref{eq:definition of C1-K}
and 
\begin{align} 
\label{eq:definition of C2}
C_2(\lambda,\mu,\x) =
(x_1^2+ x_2^2)  + 2(x_1+x_2)  
\frac{\rho}{1-\rho}
+ \frac{4\rho(1+\rho)}{(1-\rho)^2}. 
\end{align} 
\end{corollary}

\section{Proof of Theorem~\ref{thm:main}}
\label{sec:proof-of-thm}

In the proof, 
we   use the following hitting time, denoted by $T_0$ \eqref{eqn:T0-def}. 
Consider a JSQ system  described in Section~\ref{sec:model}
with queue length process $\q(t)$. We define 
\begin{align}
T_0 & = \inf\set{ t \ge 0:  \q(t) =(0,0) \mid \q(0) = (1,0) } 
\label{eqn:T0-def}
\end{align} as the time for the queue length   process to
hit state $(0,0)$ starting from $(1,0)$.
In the proof, we  
use two coupled processes defined in Section~\ref{sec:couple-definition}.

\subsection{Coupling with arrival and service} \label{sec:couple-definition}

Consider two
JSQ systems    
with queue length vectors $\p(t)$ and $\q(t)$ as described in Section~\ref{sec:model}. 
We call them 
coupled with arrival and service if they can be constructed using   
$\N_a$ and $\N_d^i$, $i =1,2$, where
$\N_a$ is a Poisson process with rate $\lambda$,
and $\N_d^1, \N_d^2$ are 
two independent Poisson processes, each with rate $\mu$.
We allow the initial queue length vectors of $\p$ and $\q$ to be different.
We also assume that 
$\N_a$, $\N_d^1$, and $\N_d^2$ are independent of each other. 
We interpret $\N_a$ as the arrival process 
and $\N_d^1, \N_d^2$ as the 
potential service processes. 
Accordingly, we can write the 
 expressions for the
dynamics of the queue length vectors in the two 
JSQ systems as
\begin{align}
\begin{split}
dp_1(t) = \I{p_1(t-) \leq p_2(t-)} \N_a(dt) - \I{p_1(t-) \geq 1} \N_d^1(dt), \\
dp_2(t) = \I{p_1(t-) > p_2(t-)} \N_a(dt) - \I{p_2(t-) \geq 1} \N_d^2(dt), \\
dq_1(t) = \I{q_1(t-) \leq q_2(t-)} \N_a(dt) - \I{q_1(t-) \geq 1} \N_d^1(dt), \\
dq_2(t) = \I{q_1(t-) > q_2(t-)} \N_a(dt) - \I{q_2(t-) \geq 1} \N_d^2(dt),  
\end{split}
\label{eqn:couple-def}
\end{align}
where $p_1(t-)$ is     
the value of
$p_1$ evaluated just before time $t$.

\subsection{Partial proof of Theorem~\ref{thm:main}}
\label{sec:proof-thm-1}


We first prove a weaker version of Theorem~\ref{thm:main}:
we only prove it up to \eqref{eqn:TV-bound}.
The proof 
 builds on
Proposition~\ref{prop-TV-coupling} and Lemma~\ref{lemma-JSQ-T-prob}.
We present the complete  proof of
Theorem~\ref{thm:main}
in Section~\ref{sec:improved-K-rho}.


\begin{proposition}~\cite{LevinPe17, Lindvall94}
\label{prop-TV-coupling}
Let $(X_t)_{t \ge 0}$ and   $(Y_t)_{t \ge 0}$
be  two copies of the same
continuous-time, discrete-valued Markov chain on a common probability space with $X_0 = \x$
and $Y_0 \sim \pi$,
where  $\pi$
is the steady state of this Markov chain.
We   assume $X_t$ and $Y_t$ take values in a countable set $\X$,
and 
let $T = \inf\{t  \ge 0: X_t = Y_t \}$ denote the first time that they meet.
Then
\begin{align*}
\TV{\mathbb{P}_{\x}^t}{\pi} 
\leq \P{T \ge t}.
\end{align*}
\end{proposition}

\begin{lemma} \label{lemma-JSQ-T-prob}
Consider two  coupled  JSQ systems 
with queue length processes $\p(t)$ and  $\q(t)$
as governed by~\eqref{eqn:couple-def}.
 Assume $\p(0)=\x$  and  $\q(0) \deq \pi$, where $\pi$ is the 
 steady state of $\p$. 
 Let $T$ be the first time 
 that 
 the two systems   have the same queue length vector. 
We have
\begin{align}
\P{T \ge t} \le
\frac{1}{t} C_1(\lambda,\mu,\x),
\label{eqn:P-ge-t-upper}
\end{align}
where $C_1(\lambda,\mu,\x)$ is defined in~\eqref{eq:definition of C1}.
\end{lemma}

Now we are ready to present the proof of Theorem~\ref{thm:main} up to \eqref{eqn:TV-bound}.
\begin{proof}[Proof of Theorem~\ref{thm:main} up to \eqref{eqn:TV-bound}]
Applying Proposition~\ref{prop-TV-coupling} 
yields
\begin{align*}
 \TV{\mathbb{P}_{\x}^t}{\pi}  \leq \P{T \ge t},
\end{align*}
where $T$ is defined in Lemma~\ref{lemma-JSQ-T-prob}.
 By~\eqref{eqn:P-ge-t-upper} in Lemma~\ref{lemma-JSQ-T-prob},
 \begin{align*}
 \TV{\mathbb{P}_{\x}^t}{\pi} 
 \leq \P{T \ge t}
 \leq \frac{1}{t} C_1(\lambda,\mu,\x)
 \end{align*}
and we complete the proof of  \eqref{eqn:TV-bound}.
\end{proof}

\subsection{Proof of Lemma~\ref{lemma-JSQ-T-prob}}



To prove Lemma~\ref{lemma-JSQ-T-prob}, we first establish 
a monotonicity property for coupled JSQ systems in
Lemma~\ref{lemma-dominance-JSQ}
with proof
  in Section \ref{sec:proof-lemma-dominance-JSQ}.
  Next, we discuss
Lemma~\ref{lemma-T-x}, 
proved in Section~\ref{sec:proof-lemma-T-x}, 
which provides an upper bound on the hitting time to the origin for a JSQ system starting from an arbitrary state $\x$, 
in terms of $T_0$ defined in~\eqref{eqn:T0-def}.
We then bound $\E[T_0]$ in Lemma~\ref{lemma-JSQ-hitting-time-heavy-traffic} with proof   given in
Section~\ref{sec:proof-lemma-JSQ-hitting-time-heavy-traffic}.
 Together, these results imply Lemma~\ref{lemma-JSQ-T-prob}.
 
%

\begin{lemma} \label{lemma-dominance-JSQ}
Consider two  coupled  JSQ systems~\eqref{eqn:couple-def} with  queue length processes $\p(t)$ and  $\q(t)$.
If $p_i(0) \ge q_i(0)$ for $i \in \two$, then $p_i(t) \ge q_i(t)$ for $i \in \two$ and
for every  $t \ge 0$. In particular, 
for any $i \in \two$ and $t \ge 0$, $p_i(t) = 0 \Rightarrow q_i(t) = 0$. 
Further,
define 
$r_i(t) = p_i(t)  - q_i(t) \ge 0$ and $r(t) = r_1(t)+r_2(t)$. Then $r(t) \le r(0)$ for any $t \ge 0.$
\end{lemma}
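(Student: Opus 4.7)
The plan is to leverage the coupling~\eqref{eqn:couple-def}: both queue length processes $\p$ and $\q$ are pure jump processes that can only change at the atoms of $\N_a \cup \N_d^1 \cup \N_d^2$, and since these three Poisson processes are independent, with probability one no two of them jump simultaneously. Thus it suffices to perform an induction on successive event epochs and show that whenever $\p(t-) \ge \q(t-)$ coordinatewise, the same holds at the post-jump values $\p(t)$ and $\q(t)$. The proof naturally splits by the type of event.

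For a service event at server $i$, I would argue as follows. If $p_i(t-) \ge 1$ and $q_i(t-) \ge 1$, both coordinates decrement by one and the inequality is trivially preserved. If $p_i(t-) \ge 1$ but $q_i(t-)=0$, then only $p_i$ decrements, and the new value remains $\ge 0 = q_i(t)$. If $p_i(t-) = 0$, the induction hypothesis forces $q_i(t-) = 0$ and neither queue moves. The other coordinate is untouched in every subcase.

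The arrival case is the main obstacle, because the JSQ routing rule $i^\star$ may select different servers in the two systems. When the two routings agree, both processes increment the same coordinate and the inequality is preserved. The delicate subcase is when they disagree, for instance $p_1(t-) > p_2(t-)$ (so $\p$ routes to server 2) while $q_1(t-) \le q_2(t-)$ (so $\q$ routes to server 1). Here the post-jump comparison at coordinate 1 requires $p_1(t-) \ge q_1(t-) + 1$, i.e. a strict improvement over the induction hypothesis. I would obtain this from the chain
\[
p_1(t-) > p_2(t-) \ge q_2(t-) \ge q_1(t-),
\]
which uses the routing condition in $\p$, the induction hypothesis at coordinate 2, and the routing condition in $\q$ respectively; since the values are integers, strict inequality yields $p_1(t-) \ge q_1(t-)+1$. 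The comparison at coordinate 2 is immediate from $p_2(t-) \ge q_2(t-)$. The symmetric disagreement subcase is handled identically.

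Having established $\p(t) \ge \q(t)$ for all $t \ge 0$, the ``in particular'' statement follows at once: if $p_i(t) = 0$ then $0 \le q_i(t) \le p_i(t) = 0$, forcing $q_i(t) = 0$. The only nonroutine ingredient in the entire argument is the verification that the strict inequality needed in the disagreement subcase of the arrival event is automatically supplied by combining the induction hypothesis with the two conflicting routing conditions; everything else is bookkeeping over the three event types.
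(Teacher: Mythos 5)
Your proof is correct and follows essentially the same approach as the paper's: induction over event epochs, trivial handling of service events, and in the arrival-disagreement case the key chain $p_1(t-) > p_2(t-) \ge q_2(t-) \ge q_1(t-)$ (routing in $\p$, induction hypothesis, routing in $\q$) supplies the needed slack. The only cosmetic difference is that you derive the required strict integer gap directly, whereas the paper phrases the same chain as a proof by contradiction (assuming the ordering breaks, deducing all four quantities are equal, and observing that equality would force both systems to route to the same server under the common tie-break).
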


\begin{lemma} \label{lemma-T-x}
Consider a   JSQ system with queue length process $\q(t)$.
 Define the random variable $\T(\x)$  as the  
  time for $\q(t)$
  to first hit state $(0,0)$
  with $\q(0) = (x_1,x_2)$:
 \begin{align*}
 \T(\x) = \T(x_1,x_2) = \inf \{t \ge 0:
 \bm{q}(t) = (0,0) | \bm{q}(0)=(x_1,x_2)  \}.
\end{align*}
Then we have $\T(x_1,x_2) \Leq \sum_{i=1}^{x_1+x_2}  T_{0i}$, where $T_{0i}$ are independent 
copies of $T_0$~\eqref{eqn:T0-def} and the relation $\Leq$ is defined in Section~\ref{sec:notation}.
\end{lemma}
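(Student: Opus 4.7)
The plan is to proceed by induction on $n = x_1 + x_2$, comparing a system starting from $(x_1, x_2)$ with one starting from a state with exactly one fewer job via the coupling machinery of Lemmas~\ref{lemma-dominance-JSQ} and~\ref{lemma-general-coupling}, and then invoking the strong Markov property to add one fresh, independent copy of $T_0$ per induction step. The base case $n = 0$ is immediate: $(x_1, x_2) = (0,0)$ gives $\T(0,0) = 0$, matching the empty sum on the right-hand side.

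For the inductive step, I would assume the claim for all initial totals up to $n - 1$ and fix $(x_1, x_2)$ with $x_1 + x_2 = n \ge 1$. Without loss of generality $x_1 \ge 1$ (the case $x_2 \ge 1$ is analogous). I would construct two JSQ processes $\p(t)$ and $\q(t)$ coupled through a common arrival process and common service processes as in~\eqref{eqn:couple-def}, with $\p(0) = (x_1, x_2)$ and $\q(0) = (x_1 - 1, x_2)$. Lemma~\ref{lemma-dominance-JSQ} then gives $p_i(t) \ge q_i(t)$ for all $t \ge 0$ and $i \in \two$, while Lemma~\ref{lemma-general-coupling} gives $r(t) \le r(0) = 1$. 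Together these force $\p(t) - \q(t) \in \{(0,0), (1,0), (0,1)\}$ at every time $t \ge 0$.

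Next I would set $\tau = \inf\{t \ge 0 : \q(t) = (0,0)\}$, so that $\tau \deq \T(x_1 - 1, x_2)$. The componentwise bound just derived forces $\p(\tau) \in \{(0,0), (1,0), (0,1)\}$. By the strong Markov property for $\p$ at the stopping time $\tau$, the residual time for $\p$ to reach $(0,0)$ is independent of the history up to $\tau$ and has the law of $\T(\p(\tau))$, which equals $0$ on the event $\{\p(\tau) = (0,0)\}$ and has law $T_0$ on $\{\p(\tau) \in \{(1,0),(0,1)\}\}$, using the symmetry $\T(0,1) \deq \T(1,0) = T_0$ obtained by swapping queue labels. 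Hence the residual is stochastically dominated by an independent copy $\hat{T}_0$ of $T_0$, giving $\T(x_1, x_2) \Leq \tau + \hat{T}_0$ with $\hat{T}_0$ independent of $\tau$. Applying the inductive hypothesis $\tau \Leq \sum_{i=1}^{n-1} T_{0i}$ and composing with the independent term $T_{0n} := \hat{T}_0$ then yields $\T(x_1, x_2) \Leq \sum_{i=1}^{n} T_{0i}$.

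The main obstacle is the careful bookkeeping of the three-layer stochastic comparison at the inductive step: a sample-path coupling produces the deterministic inequality between the hitting time of $\p$ and $\tau + \hat{T}_0$; a distributional identity $\tau \deq \T(x_1-1, x_2)$ lets the inductive hypothesis kick in; and the strong Markov property delivers the crucial independence of $\hat{T}_0$ from $\tau$, which is what allows the right-hand side to grow by a genuinely i.i.d. term rather than a dependent one. A subtle side-check is the symmetry $\T(0,1) \deq T_0$ despite the smaller-index tie-breaking rule; this follows because relabeling queues is a measure-preserving bijection on the coupled Poisson driving processes that leaves the event $\{\q(t) = (0,0)\}$ invariant.
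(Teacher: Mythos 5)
Your proposal is correct and takes essentially the same route as the paper: couple a system started from $(x_1,x_2)$ with one started from a neighboring state with one fewer job, use Lemma~\ref{lemma-general-coupling} (together with Lemma~\ref{lemma-dominance-JSQ}) to pin $\p$ to the set $\{(0,0),(1,0),(0,1)\}$ at the first time $\q$ hits the origin, identify the residual with $T_0$ via the queue-relabeling symmetry, and induct on the total initial queue length. You simply make explicit what the paper leaves implicit, namely the strong Markov property at the stopping time and the fact that the freshly added $T_0$-copy is independent of the accumulated time.
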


\begin{lemma} \label{lemma-JSQ-hitting-time-heavy-traffic} 
Consider $T_0$~\eqref{eqn:T0-def}   and assume $\rho = \frac{\lambda}{2\mu} < 1$.
Then
\begin{align} 
\E[T_0] \leq
\frac{1}{\lambda} \frac{\rho (2 - \rho)}{(1-\rho)^2}.
 \label{eqn:ET1-T0-bound-general}
\end{align}
\end{lemma}


To prove Lemma~\ref{lemma-JSQ-T-prob}, we also use
Proposition~\ref{prop-JSQ-stationary-bound} from the literature,
which provides an upper bound on
$\E_\pi[q_1 + q_2]$ for the JSQ system. 

\begin{proposition} [Proposition 4 and Remark 2 of~\cite{Dester17}] 
\label{prop-JSQ-stationary-bound}
For the JSQ system     described in Theorem~\ref{thm:main} 
with $\rho = \frac{\lambda}{2\mu} < 1$,  we have
\begin{align*}
\E_\pi[q_1+ q_2] \leq  \frac{\rho(2 - \rho)}{1-\rho}.
\end{align*}
\end{proposition}

\revision{
Although~\cite{Dester17} uses
uniform random tie-breaking, its total-queue-length bound remains valid here because the 
sorted queue-length process is independent of the tie-breaking rule.}

\begin{proof}[Proof of Lemma~\ref{lemma-JSQ-T-prob}] 
We consider a JSQ system with queue length process $\s$,
driven by   the same arrival and potential service processes as $\p$ and $\q$,
with $s_j(0) = \max\set{q_j(0),p_j(0)}, j \in \two$.
We couple the three systems $\p,\q,\s$ similarly to Section~\ref{sec:couple-definition}. 
 Formally, let 
$\s$
 evolve according to
\begin{align*}
\begin{split}
ds_1(t) &= \I{s_1(t-) \leq s_2(t-)} \N_a(dt) - \I{s_1(t-) \ge 1} \N_d^1(dt), \\
ds_2(t) &= \I{s_1(t-) > s_2(t-)} \N_a(dt) - \I{s_2(t-) \ge 1} \N_d^2(dt). 
\end{split}
\end{align*}

From Lemma~\ref{lemma-T-x}, the time $\s$ hits 0, denoted by $T'$, 
is stochastically dominated by
$\sum_{i=1}^{s_1(0) + s_2(0)} T_{0i}$,
where $T_{0i}$ are i.i.d. copies of $T_0$~\eqref{eqn:T0-def}. 
Since $\s(0)\ge \p(0)$ and $\s(0)\ge \q(0)$ componentwise, Lemma~\ref{lemma-dominance-JSQ}
(applied to the coupled pairs $(\s,\p)$ and $(\s,\q)$) implies that for all $t \ge 0$,
$\s(t)\ge \p(t)$ and $\s(t)\ge \q(t)$ componentwise. Thus, for any $t \ge 0$,
$\s(t)=(0,0)$ implies $\p(t)=(0,0)$ and $\q(t)=(0,0)$, and hence 
$T \le T'$ almost surely.
Since $T' \Leq \sum_{i=1}^{s_1(0) + s_2(0)} T_{0i}$, we have
\begin{align}
    \E[T]
& \le \E[T']
= \E_{\s(0)}[\E[T'] \mid \bm{s}(0)] \nonumber \\
&  \le \E_{\s(0)}[(s_1(0)+s_2(0)) \E[T_0]]
= \E_{\q(0)}\Big[\max\set{q_1(0),p_1(0)} + \max\set{q_2(0),p_2(0)}\Big] \E[T_0]. 
\label{eqn:E-T-bound-in-q}
\end{align}
Using $\max\set{x,y} \le x+y$ for $x,y \ge 0$ and    $\q(0) \deq \pi$,~\eqref{eqn:E-T-bound-in-q} implies
\begin{align}
\E[T] 
\le \E[T_0] \Big( 
 (x_1+x_2) + \E_\pi [q_1 + q_2] 
 \Big).
 \label{eqn:T-upper-case-1}
\end{align}

Using
$\E_\pi[q_1+ q_2] \leq  \frac{\rho(2 - \rho)}{1-\rho}$ from
Proposition~\ref{prop-JSQ-stationary-bound} and recalling $\E[T_0] \leq
\frac{1}{\lambda} \frac{\rho (2 - \rho)}{(1-\rho)^2}= \frac{2-\rho}{2\mu (1-\rho)^2}$
from~\eqref{eqn:ET1-T0-bound-general} in Lemma~\ref{lemma-JSQ-hitting-time-heavy-traffic}, we 
obtain $\E[T] \le  C_1(\lambda,\mu,\x)$~\eqref{eq:definition of C1}.
Applying   
Markov's inequality    $\P{T \ge t} \leq \frac{\E[T]}{t}$ for $t > 0$,  we obtain $\P{T \ge t} \le  
 \frac{1}{t} C_1(\lambda,\mu,\x)$
and  we complete the proof of Lemma~\ref{lemma-JSQ-T-prob}.
 \end{proof}




\subsection{Proof of Lemma~\ref{lemma-dominance-JSQ}}

\label{sec:proof-lemma-dominance-JSQ}

Consider the processes $\N_a(t), \N_d^1(t)$, and $\N_d^2(t)$ in~\eqref{eqn:couple-def}.
Fix a  sample path.
Let $t_1$ be the first time that either
an arrival or a potential
service completion
event occurs. 
In   words, $t_1$ is the smallest time 
$t$ such that 
$\N_a(t)  =1$ or $\N_d^1(t) = 1$ or $\N_d^2(t) = 1$. We will show that $p_i(t_1) \ge q_i(t_1)$.
\begin{enumerate}
    \item If the first event is a potential service completion, then as we assume the two systems are coupled,  we   have
$dp_j(t_1) = - \I{p_j(0) \ge 1}, 
dq_j(t_1) = - \I{q_j(0) \ge 1}$, 
where $j$ is the index such that $\N_d^j(t_1) = 1$.
It is easy to verify that
$p_i(t_1) \ge q_i(t_1)$ for $i \in \two$ given $p_i(0) \ge q_i(0)$.
\item  
\revision{Next, suppose that the first event is an arrival. For contradiction, assume
that $p_j(t_1)<q_j(t_1)$ for some $j\in\{1,2\}$. Since 
$p_j(0)\ge q_j(0)$ and only
one arrival occurs at time $t_1$, we must have
\begin{align}
    p_j(0)=q_j(0), \label{eqn:p-j-0-eq-q}
\end{align}
and at time $t_1$,
the arrival must be routed to queue $3-j$ in the $p$-system and to queue $j$
in the $q$-system.
Thus, the JSQ rule implies
\[
p_{3-j}(0) \leq p_j(0)
\qquad \text{and} \qquad
q_j(0) \leq q_{3-j}(0).
\]

Combining these inequalities with $\p(0) \ge  \q(0)$ and~\eqref{eqn:p-j-0-eq-q}, we obtain
\[
p_{3-j}(0)
\geq q_{3-j}(0)
\geq q_j(0)
= p_j(0)
\geq p_{3-j}(0).
\]
Every inequality in the above chain must therefore be an equality. Thus,
\[
p_1(0) = p_2(0)
\qquad \text{and} \qquad
q_1(0) = q_2(0).
\]
By the deterministic tie-breaking rule,
at $t_1$,
both systems must route the arrival to queue $1$, contradicting the fact that the arrival is routed to queue
$j$ in one system and to queue $3-j$ in the other.

}
\end{enumerate}
Combining the above two cases, we   show $p_i(t_1) \ge q_i(t_1)$ for $i \in \two$.
Proceeding in the same
way,
we   show that  $p_i(t) \ge q_i(t)$ for any $t \ge 0$ and  $i \in \two$.
\\
Next,
we will  show that $r(t) \le r(0)$ holds for every $t \ge 0$.
We
fix a sample path.
Consider the processes $\N_a(t), \N_d^1(t),$ and $\N_d^2(t)$ in~\eqref{eqn:couple-def}.
Let $t$ denote the first time that either an arrival or a potential service completion  event
occurs.
\begin{enumerate}
    \item If the first event is an arrival, then it is obvious that $r(t) = r(0)$.
\item If the first event is a potential service completion, then 
we let $i$ denote the index such that $\N_d^i(t) = 1$.
Since $\p(0) \ge \q(0)$, 
we have $p_i(t) \ge q_i(t)$ as shown above.   
Thus,
there are two cases:
\begin{enumerate}
\item $\I{p_i(t-) > 0} = \I{q_i(t-) > 0}$, which implies
$r(t) = r(0)$.
\item $p_i(t-) > 0,q_i(t-) = 0$, and $dp_i(t) = -1,dq_i(t) = 0$,  
which 
implies that $r(t) \le r(0)$.  
\end{enumerate}   
\end{enumerate}  
Combining the two cases, we show  $r(t) \le r(0)$
for the $t$ defined above.
Using the same argument, we show  that
 $r(t) \le r(0)$ holds for every $t \ge 0$.
\subsection{Proof of Lemma~\ref{lemma-T-x}}
\label{sec:proof-lemma-T-x}




Consider two  coupled JSQ  systems 
with queue length processes $\p(t)$ and  $\q(t)$
as governed by~\eqref{eqn:couple-def}.
Suppose $\p(0)=(x+1,y)$ and $\q(0)=(x,y)$ for some $x \ge 0, y \ge 0$. 
From Lemma~\ref{lemma-dominance-JSQ},
for any $t \ge 0$, $\q(t) = (0,0)$ implies 
$\p(t) \in \set{(1,0),(0,1), (0,0)}$. 

Next, we show that
$T(0,1) \deq  T(1,0) \deq T_0$. 
We couple two copies of the JSQ system, with queue-length processes
$\q^{(1)}$ and $\q^{(2)}$, started from $(1,0)$ and $(0,1)$,
respectively. We use the same arrival process in both systems and couple
the service-completion times of their unique initial jobs to be identical.
Thus, if the service completion occurs before the first arrival, both
systems move to $(0,0)$; if an arrival occurs first, both systems move to
$(1,1)$. Hence, under this coupling, the two copies are in the same
post-jump state at the first event time.
From that time onward, we couple them using the same
arrival and potential service processes, so that they evolve identically.
Thus,  $T(0,1) \deq  T(1,0) \deq T_0$.

Combining $\q(t) = (0,0) \Rightarrow
\p(t) \in \set{(1,0),(0,1), (0,0)}, \forall t \ge 0$, 
$\quad T(0,1) \deq  T(1,0) \deq T_0$, and
the strong Markov property,  we have
$\T(x+1,y) \Leq \T(x,y) + T_0'$, where $T_0'$ is an independent copy of $T_0$. 
Similarly, we can show that
$\T(x,y+1) \Leq \T(x,y) + T_0'$. 
Iterating these one-step bounds yields
$T(x_1,x_2) \Leq \sum_{i=1}^{x_1+x_2} T_{0i}$.

\subsection{Proof of Lemma~\ref{lemma-JSQ-hitting-time-heavy-traffic}}

\label{sec:proof-lemma-JSQ-hitting-time-heavy-traffic}

\newcommand\mm{^{\text{M}}}
\newcommand\jsq{^{\text{J}}}
\newcommand{\Ppit}[1]{\mathbb{P}_{\pi\mm}\left(#1 \right)}

\arxiv{
As discussed, 
we   analyze
the expected   return time of a CTMC constructed by two 
$M/M/1$ queues.
First, we present 
a general result for the expected hitting time in terms of 
the $Q$-matrix of a CTMC.
\begin{proposition}[Theorem 3.5.3 of~\cite{Norris97}]
Let $Q$ be an irreducible $Q$-matrix of a CTMC. 
The following are equivalent: \\
(i) the CTMC is positive recurrent; \\ 
(ii) $Q$ is non-explosive and has a steady state $\pi$.
\\
Moreover, 
when (ii) holds, we have  $m_i = \frac{1}{\pi_i \nu_i}$ for all $i$,  
where 
$\nu_i$ is the absolute value of
$i$-th diagonal entry of $Q$ and
 $m_i$ is the expected   time 
 to 
 return to state $i$  
 when starting from state $i$
including the holding time in state $i$.
 \label{prop:norris}
 \end{proposition}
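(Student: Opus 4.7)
The plan is to combine the strong Markov property with a standard renewal-reward argument; both the equivalence and the formula are classical facts about countable-state CTMCs, so I would sketch only the skeleton.

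First, for (i)$\Rightarrow$(ii), positive recurrence makes each mean return time finite and in particular rules out explosion: between two consecutive visits to a recurrent state $i$ only finitely many jumps occur almost surely, and $i$ is visited infinitely often, so the total number of jumps by any finite time is a.s.\ finite. To construct $\nu$, fix a reference state $i$ and define
\[
\tilde\nu_j \defeq \E_i\!\left[ \int_0^{\tau_i} \I{X_s = j} \, ds \right],
\]
where $\tau_i$ is the first return time to $i$. By irreducibility and positive recurrence, $\tilde\nu$ is a nontrivial finite measure on the state space, and a direct strong-Markov computation shows $\tilde\nu Q = 0$; normalizing yields a stationary distribution $\nu$.

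For (ii)$\Rightarrow$(i), assume non-explosion and existence of $\nu$. By the continuous-time ergodic theorem the empirical time-averages $\int_0^t \I{X_s = j} \, ds / t$ converge to $\nu_j$ almost surely for each $j$, and combining this with the renewal-reward theorem applied to successive returns to state $i$ forces the expected return time $m_i$ to be finite, hence positive recurrence. To derive the formula $m_i = 1/(\nu_i q_i)$, fix state $i$ and observe that between two successive visits to $i$ the chain spends exactly one exponential holding period at $i$ with expected duration $1/q_i$, where $q_i = -Q_{ii}$ is the total exit rate. By renewal-reward, the long-run fraction of time at $i$ equals $(1/q_i)/m_i$, while by the ergodic theorem it equals $\nu_i$; equating the two expressions yields the claim.

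The main obstacle is handling non-explosiveness carefully: on a countably infinite state space the CTMC is a priori defined only up to an explosion time $\zeta$, so both the ergodic and renewal arguments must be applied to a process that is almost surely well-defined for all $t \ge 0$. Under positive recurrence this is automatic since state $i$ is visited infinitely often at finite times a.s., forcing $\zeta = \infty$; in the converse direction non-explosion is already part of the hypothesis (ii). A secondary subtlety is verifying that $\tilde\nu Q = 0$ in the infinite-dimensional sense and that the measure is genuinely finite, both of which reduce to the finiteness of $m_i$ together with a Fubini interchange.
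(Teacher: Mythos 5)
The paper does not prove this proposition; it is quoted from Norris's textbook (Theorem 3.5.3) and used as a black box, so there is no in-paper proof to compare your sketch against. Your outline follows a standard occupation-time / renewal-reward route; the (i)$\Rightarrow$(ii) direction and the renewal-reward derivation of $m_i = 1/(\nu_i q_i)$ are sound. The step that needs more care is (ii)$\Rightarrow$(i): you invoke ``the continuous-time ergodic theorem'' to conclude $\frac{1}{t}\int_0^t \I{X_s = j}\,ds \to \nu_j$ a.s., but the usual statements of that theorem for Markov chains already presuppose recurrence (to guarantee that the stationary version of the process is ergodic), and recurrence is precisely what you are trying to establish, so as written the argument is circular. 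A non-circular fix: start the chain from $\nu$, so $\mathbb{P}_\nu(X_t = j) = \nu_j > 0$ for all $t$ and $j$; this is incompatible with transience, under which $\mathbb{P}_\nu(X_t = j) \to 0$, so the chain is recurrent. For an irreducible, recurrent, non-explosive CTMC the invariant measure is unique up to scale, so your occupation-time measure satisfies $\tilde\nu = c\nu$ with $c = \tilde\nu_i/\nu_i = 1/(q_i \nu_i) < \infty$ (since $\tilde\nu_i$ is just the mean holding time at $i$), and because $m_i = \sum_j \tilde\nu_j = c\,\nu(I) = c$, this yields positive recurrence and the formula $m_i = 1/(q_i\nu_i)$ in one stroke. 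For reference, Norris's own proof transfers everything to the embedded jump chain via the correspondence ``$\lambda$ is $Q$-invariant if and only if $\mu_j \defeq \lambda_j q_j$ is invariant for the jump matrix,'' which reduces to the well-understood discrete-time theory and sidesteps the ergodic-theorem issue altogether.
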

Then we use the following result comparing JSQ and a relevant  system
with two independent $M/M/1$ queues.
\begin{proposition}[Theorem 4 of~\cite{Turner98}] 
\label{prop-JSQ<=MM1}
  Let $(q\jsq_1(t),q\jsq_2(t))$ denote the queue length vector for the JSQ system defined in Theorem~\ref{thm:main}
  and 
 $(q\mm_1(t),q\mm_2(t))$ denote the queue length vector for the two independent $M/M/1$ queues  with $\bm{q}\jsq(0) = \bm{q}\mm(0)$. 
 Further, we assume both  $M/M/1$
 queues have an
 arrival rate  of $\frac{\lambda}{2}$ and a service rate  of $\mu$. 
 Then, there exists a coupling such that
 \begin{align*}
\sum_{i=1}^2 q\jsq_i(t)  \le
\sum_{i=1}^2 q\mm_i(t),   t \ge 0 \text{ almost surely}.
 \end{align*}
 This shows that, in this coupling, almost surely, 
 $\q\mm(t)=(0,0)$ implies $\q\jsq(t)=(0,0)$ for any $t \ge 0$. 
 \end{proposition}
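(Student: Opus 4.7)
The plan is to build an explicit coupling of the two systems on a common probability space and verify the componentwise dominance by induction on event epochs. I would use the framework of Section~\ref{sec:couple-definition}: a single Poisson arrival process $\N_a$ of rate $\lambda$ and two common Poisson service processes $\N_d^1,\N_d^2$ of rate $\mu$. Services are shared: at a jump of $\N_d^i$, both $p_i$ and $q_i$ are decremented if positive. The JSQ process places each arrival of $\N_a$ at $i_J = \arg\min\{p_1,p_2\}$ with ties broken to index $1$, and for the $M/M/1$ pair the same epochs of $\N_a$ are split by an independent fair Bernoulli; by Poisson thinning the streams feeding $q_1$ and $q_2$ are then two independent Poisson$(\lambda/2)$ processes, so that $(q_1(t),q_2(t))$ has the required law of two independent $M/M/1$ queues.

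I would then argue $p_i(t) \le q_i(t)$ by induction on the ordered event epochs. The base case $\p(0)=\q(0)$ is immediate. At a service epoch at queue $i$, the gap $r_i := q_i - p_i$ is either unchanged, or in the case $p_i = 0 < q_i$ it decreases from a value $\ge 1$ down to a nonnegative one; the ``bad'' case $p_i > 0 = q_i$ is ruled out by the inductive hypothesis. Hence services preserve dominance. The delicate step is an arrival: if JSQ routes to $i_J$ while the coin flip sends the $M/M/1$ arrival to $3-i_J$, and simultaneously $p_{i_J} = q_{i_J}$, the naive coupling would violate componentwise dominance in coordinate $i_J$. I would handle this by exploiting the exchangeability of $(q_1,q_2)$: at such ``tight'' arrival epochs, I would swap the labels of the two $M/M/1$ queues. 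Since the two queues are statistically identical and independent, this label swap leaves the joint law of the pair invariant while restoring $p_i \le q_i$ in both coordinates.

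The main obstacle is precisely to verify that this state-dependent relabeling is compatible with $(q_1(t),q_2(t))_{t\ge 0}$ having the full path-level law of two independent $M/M/1$ queues, not merely the correct one-dimensional marginal at each fixed $t$. I would resolve this by describing the coupling directly as a Markov process on the cone $\{(\p,\q): \p \le \q \text{ componentwise}\}$ whose generator (i) projects to the JSQ generator on the $\p$-coordinate, (ii) projects to the product $M/M/1$ generator on the $\q$-coordinate, by using the $q_1 \leftrightarrow q_2$ symmetry to absorb the label swaps into symmetric transition rates, and (iii) never leaves the dominance cone. A cleaner alternative I would consider is to first establish the sort-coordinate dominance $\mathrm{sort}(\p)(t) \le \mathrm{sort}(\q)(t)$ componentwise by a direct sample-path induction on the natural coupling, and then promote it to the componentwise statement via a $\p$-measurable symmetric permutation of the $M/M/1$ labels. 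For the downstream use in this paper, namely $\q(t) = (0,0) \Rightarrow \p(t) = (0,0)$, the sorted version already suffices, so either route completes the argument.
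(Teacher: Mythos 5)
The paper offers no proof of this proposition---it is imported verbatim as Theorem 4 of Turner (1998)---so the only thing to assess is whether your construction is sound, and unfortunately it is not. The gap is at the arrival step, and it cannot be patched, because the componentwise conclusion you are aiming for is not achievable by \emph{any} coupling. Concretely: start both systems at $\p(0)=\q(0)=(0,0)$ and let two arrivals occur before any service. JSQ yields $\p=(1,1)$, while with probability $1/2$ your two Bernoulli coins agree and the $M/M/1$ pair lands in $(2,0)$ or $(0,2)$. Neither labeling of $(2,0)$ dominates $(1,1)$ componentwise, so the label swap you propose at ``tight'' epochs does not restore the order, and the same example defeats your sorted fallback, since $\min_i p_i(t)=1>0=\min_i q_i(t)$. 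More structurally, pathwise componentwise dominance would force $\P{p_1(t)\ge 1,\,p_2(t)\ge 1}\le \P{q_1(t)\ge 1,\,q_2(t)\ge 1}$ for every $t$, because $\I{x_1\ge 1}\I{x_2\ge 1}$ is componentwise non-decreasing; but JSQ keeps both servers busy \emph{more} often than independent splitting (in steady state $\Ppi{p_1\ge 1,\,p_2\ge 1}=2\rho-1+\Ppi{\p=(0,0)}\approx 2\rho^2$ for small $\rho$, versus $\rho^2$ for the product of two $M/M/1$ queues). Hence the time-$t$ laws are not stochastically ordered in the componentwise order, and no amount of relabeling can produce $p_i(t)\le q_i(t)$ for both $i\in\two$.

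What \emph{is} provable along your lines---and what every downstream use in this paper actually requires---is the weak-majorization comparison: under a coupling that shares arrival epochs and matches the two service clocks by \emph{rank} (one clock serves the longer queue of each system, the other the shorter), one maintains by induction the pair of inequalities $\max_i p_i(t)\le \max_i q_i(t)$ and $p_1(t)+p_2(t)\le q_1(t)+q_2(t)$; neither inequality is inductive on its own, but the pair is. The sum inequality gives $\q(t)=(0,0)\Rightarrow\p(t)=(0,0)$, which is all that Lemma~\ref{lemma-JSQ-hitting-time-heavy-traffic} needs, and weak submajorization together with the fact that $x_1^2+x_2^2$ is symmetric, convex and coordinatewise non-decreasing gives $\E[p_1(t)^2+p_2(t)^2]\le\E[q_1(t)^2+q_2(t)^2]$, which is what the proof of Corollary~\ref{coro-moment-difference} needs. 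If you want a self-contained replacement for the citation, prove that statement rather than the componentwise one.
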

 
With the above tool, we prove Lemma~\ref{lemma-JSQ-hitting-time-heavy-traffic} as follows.}
Consider two independent $M/M/1$ queues
with arrival rates $\lambda/2$ and service rates $\mu$, and denote their queue lengths 
as
$q\mm_1(t)$ and $q\mm_2(t)$.
Then their Cartesian product, $\bm{q}\mm(t) = (q\mm_1(t),q\mm_2(t))$,
is also a CTMC.  
 Since each marginal queue is irreducible and positive recurrent under the condition $\lambda/2 < \mu$, it follows that the joint process $\bm{q}\mm(t)$ is also irreducible and positive recurrent, with 
 the stationary distribution given by the product of the two marginal stationary distributions. 
 Under stationarity,
   $\P{q\mm_1 = k} = \P{q\mm_2 = k} = \rho^k(1-\rho), \forall k \in \mathbb{Z}_+$. 
Since they are independent, 
$\Ppit{q\mm_1=q\mm_2=0} = (1-\rho)^2$,
where $\pi\mm$ is the stationary distribution of $\q\mm$.

Let $m\mm_{(0,0)}$ denote the  
expected   time
for    $\bm{q}\mm$
to return to    $(0,0)$
when starting from $(0,0)$,
including the holding time in $(0,0)$.
Applying \journal{Theorem~3.5.3 of~\cite{Norris97}}\arxiv{Proposition \ref{prop:norris}}
to $\bm{q}\mm$,
and noting that the total transition rate out of state $(0,0)$ is
$\nu\mm_{(0,0)}=\lambda$, we obtain
\begin{align}
m\mm_{(0,0)}
= \frac{1}{\nu\mm_{(0,0)} \Ppit{q\mm_1=q\mm_2=0}} 
= \frac{1}{\lambda (1-\rho)^2}.
\label{eqn:m-0-0-exp}
\end{align}

Starting from state $(0,0)$, the process   $\bm{q}\mm$
leaves $(0,0)$ after an $\mathsf{Exp}(\lambda)$ time and
moves to either $(1,0)$ or $(0,1)$. 
Let $T_{(1,0)}^{(0,0)}$ be the time
for $\bm{q}\mm$
to hit      state $(0,0)$
starting from state $(1,0)$.
By 
symmetry,
\begin{align*}
    m\mm_{(0,0)} =
    \frac{1}{\lambda}+\E\left[T^{(0,0)}_{(1,0)}\right].
\end{align*}
Combining the above equation 
with \eqref{eqn:m-0-0-exp}, we have
\begin{align}
\E\left[T^{(0,0)}_{(1,0)}\right]
=
 \frac{1}{\lambda} \left(\frac{1}{(1-\rho)^2} - 1\right)
= \frac{1}{\lambda} \frac{\rho (2 - \rho)}{(1-\rho)^2}.
\label{eqn:E-T-0-0-1-0-expr}
\end{align}


Next, consider a two-queue JSQ system $\bm{q}\jsq(t)=(q\jsq_1(t),q\jsq_2(t))$ 
(with an arrival rate $\lambda$ and  service rate $\mu$ at each queue)
and the product chain
$\bm{q}\mm(t)=(q\mm_1(t),q\mm_2(t))$ of two independent $M/M/1$ queues defined previously,  both
started from the same initial state
$\q\jsq(0)=\bm{q}\mm(0)=(1,0)$. 
\journal{By Theorem~4 of~\cite{Turner98}, there exists a coupling of
$\{\q\jsq(t)\}_{t\ge0}$ and $\{\bm{q}\mm(t)\}_{t\ge0}$ such that  
\begin{equation}\label{eq:turner-majorization}
\sum_{i=1}^2 [q\jsq_i(t)-x]_+ 
\le
\sum_{i=1}^2 [q\mm_i(t)-x]_+, \forall x \ge 0, t \ge 0,
\end{equation}
where $[x]_+ = \max\{x,0\}$.
In particular, taking $x=0$ in \eqref{eq:turner-majorization}
yields $\sum_{i=1}^2 q\jsq_i(t)\le \sum_{i=1}^2 q\mm_i(t)$ for all $t\ge0$.
It follows that}\arxiv{By Proposition \ref{prop-JSQ<=MM1}, there exists a coupling such that}
\begin{align*}
T_0  = \inf\set{t \ge 0:  \bm{q}\jsq(t)=(0,0)}
\le
\inf\set{t \ge 0:  \bm{q}\mm(t)=(0,0)}
=
T^{(0,0)}_{(1,0)}
\qquad\text{almost surely.}
\end{align*}
Using the above inequality and \eqref{eqn:E-T-0-0-1-0-expr}, we show  $\E[T_0] \le
\E\left[T^{(0,0)}_{(1,0)}\right]
=
\frac{1}{\lambda} \frac{\rho (2 - \rho)}{(1-\rho)^2}$
and we finish the proof.

\subsection{Complete proof of Theorem~\ref{thm:main}}
\label{sec:improved-K-rho}

 To prove
Theorem~\ref{thm:main},
we will construct  a  tighter upper bound on $\E[T_0]$
in Lemma~\ref{lemma-JSQ-T0T1}.
Throughout,   when we write $Z,Z'$, we
 mean random variables that are independent copies of $Z$,
  and are independent of all random variables defined previously (unless stated otherwise).

 First,  we define $T_1$ as 
\begin{align}
T_1 =&  \inf \set{ t \ge 0:  \q(t) \in \set{(0,1), (1,0)} \mid \q(0) = (1,1)}
\label{eqn:T1-def}.
\end{align}

\begin{lemma} \label{lemma-JSQ-T0T1}  
Define two independent random variables
$X \sim \Exp{\lambda + 2\mu}$
and  $Y \sim \Exp{\lambda + \mu}$,
then  $T_1 \Leq A$ and $T_0 \deq B$ where $A$ and $B$ are random variables defined as
  \begin{align}
A \deq X +    \left\{
\begin{aligned}
 & 0  &, ~ \wp ~ \frac{2\mu}{\lambda + 2\mu}, \\
 &  T_1' + T_0'  &, ~ \wp ~ \frac{\lambda}{\lambda + 2\mu}, \\
\end{aligned}
\right.
\label{eqn:rv-A-def}
\end{align}

and

\begin{align}
B  \deq Y +   \left\{
\begin{aligned}
& 0 &, ~ \wp ~ \frac{\mu}{\lambda + \mu}, \\
&   T_1' + T_0'  &, ~ \wp ~ \frac{\lambda}{\lambda + \mu}, \\
\end{aligned}
\right.
\label{eqn:rv-B-def}
\end{align}
where
$T_1' \deq T_1, T_0' \deq T_0$,   $T_1',T_0'$ are independent, and 
independent of $T_1,T_0,X,Y$.
 Furthermore, if $\rho < \frac{1}{\sqrt{2}}$, 
   \begin{align}
 \left\{
\begin{aligned}
 & \E[T_1] \leq  &  \frac{\lambda+\mu}{2\mu^2  - \lambda^2}, \\
 & \E[T_0] \leq   &  \frac{\lambda+2\mu}{2\mu^2  - \lambda^2}.  
 \end{aligned}
\right. \label{eqn:ET1-T0-bound}
\end{align}
Further,
\begin{align}
\E[T_0] \le \ \frac{\lambda+2\mu}{2\mu^2  - \lambda^2}
 = \frac{1}{\lambda} \frac{1 + \frac{2\mu}{\lambda}}{2 \frac{\mu^2}{\lambda^2} - 1}
  = \frac{1}{\lambda} \frac{1+ \frac{1}{\rho}}{\frac{1}{2} \frac{1}{\rho^2} - 1}
=\frac{1}{\lambda} \frac{2\rho^2 + 2\rho}{1-2\rho^2}
. 
\label{eqn:algebraic-for-K-rho}
\end{align}
 \end{lemma}



We replace  \eqref{eqn:ET1-T0-bound-general}
in the proof of Lemma~\ref{lemma-JSQ-T-prob}
by the sharper bound \eqref{eqn:algebraic-for-K-rho}
from   Lemma~\ref{lemma-JSQ-T0T1} (and taking the minimum with \eqref{eqn:ET1-T0-bound-general} when appropriate).
Recalling the expression of $K(\rho)$ in \eqref{eqn:def-K},
since
\begin{align*}
\frac{\rho (2 - \rho)}{(1-\rho)^2}
\cdot \left( 
\frac{2+ 2\rho}{1-2\rho^2} 
\cdot \frac{(1-\rho)^2} {2- \rho}\right)
= \frac{2\rho^2 + 2\rho}{1-2\rho^2},
\end{align*}
we
show $
\P{T \ge t} \le
\frac{1}{t} C^K_1(\lambda,\mu,\x),$
where $C^K_1(\lambda,\mu,\x)$ is defined in~\eqref{eq:definition of C1-K}.
Thus, we complete the
proof of Theorem~\ref{thm:main}.

\subsection{Proof of Lemma~\ref{lemma-JSQ-T0T1}}
\label{sec-proof-lemma-JSQ-T0T1}

To prove  Lemma~\ref{lemma-JSQ-T0T1}, we   use   Lemma~\ref{lemma-dominance-JSQ} 
to construct a recursive relationship  
between 
  $T_0$~\eqref{eqn:T0-def} 
and $T_1$~\eqref{eqn:T1-def}.
We define
\begin{align*}
\T(\x; \mc{A}) \defeq  \inf
\set{ t  \ge 0: 
\p(t) \in \mc{A}
\mid \p(0) = \x 
}
\end{align*} as 
the random time it takes 
for a JSQ queue length process
$\p(t)$ to hit a set $\mc{A}$
starting from initial queue length $\x$.
 We analyze $T_1$~\eqref{eqn:T1-def} and $T_0$~\eqref{eqn:T0-def} separately.
\begin{enumerate}
\item 
We first analyze $T_1$~\eqref{eqn:T1-def}.
We let $\p(0) = (1,1)$,
and let $t_1$ denote the first time that 
there will be either an arrival or a service completion event. 
Then 
$t_1 \deq X$.
\begin{enumerate}
\item  If the first event is a service completion, which has probability $ \frac{2\mu}{\lambda + 2\mu}$, then  
$\p(t_1) \in \set{(1,0), (0,1)}$. 
\item If the first event is an arrival, which has probability $ \frac{\lambda}{\lambda + 2\mu}$, 
then $\p(t_1) = (2,1)$. 
Now consider two JSQ systems with queue length vectors $\s(t), \w(t)$ and $\s(0)=(2,1), \w(0) = (1,1)$. 
We also assume they are coupled with   arrival and   service, as defined in Section~\ref{sec:couple-definition}.
Thus,  $\w$ will hit
the set
$\set{(1,0), (0,1)}$ 
at some time $t_2 \deq T_1$ 
with $T_1$ defined in~\eqref{eqn:T1-def}.
By Lemma~\ref{lemma-dominance-JSQ},
\begin{align*}
\s(t_2) \in \set{(1,1),(1,0),(0,1),(2,0),(0,2)}.  
\end{align*}
Next, we analyze the total
time for $\s$ to hit $\set{(1,0),(0,1)}$ starting from its initial state $\s(0)=(2,1)$, which can be written as 
$\mc{T}((2,1); \set{(1,0), (0,1)})$.
There are several cases:
\begin{enumerate} 
\item      
If $\s(t_2) \in \set{(1,0), (0,1)}$,
then we have $\mc{T}((2,1); \set{(1,0), (0,1)}) = t_2$ conditioned on this event.
\item     
If $\s(t_2) = (1,1)$,
and suppose it
takes
another $T_1'$ time 
for $\s$ to hit 
$\set{(1,0), (0,1)}$.
In addition,
using 
Lemma~\ref{lemma-dominance-JSQ},
we can show $T_1' \Leq T_0'$:
indeed, couple $\s$ (started from $(1,1)$) with a JSQ process $\q$ started from $(1,0)$
using the common arrival/service construction, and let
\[
\tau := \inf\{t\ge 0 : \q(t) = (0,0)\} \stackrel{d}{=} T_0'.
\]
By Lemma~\ref{lemma-dominance-JSQ}, $\q(\tau)=(0,0)$ implies
$\s(\tau)\in \set{(0,0),(1,0),(0,1)}$, so $\s$ must have hit $\set{(1,0),(0,1)}$ by time $\tau$.
Thus, $T_1' \le \tau$ almost surely, which yields $T_1' \Leq T_0'$.
\item     
If $\s(t_2) \in \set{(2,0),(0,2)}$.
Couple a JSQ process $\p$ with $\p(0)=(2,0)$ and another JSQ process $\q$ with $\q(0)=(1,0)$ using the common arrival/service construction, and let $\tau:=\inf\{t \ge 0: \q(t)=(0,0)\}\overset d=T_0$. By Lemma~\ref{lemma-dominance-JSQ}, $\q(\tau)=(0,0)$ implies $\p(\tau)\in\{(0,0),(1,0),(0,1)\}$; moreover, any sample path from $(2,0)$ to $(0,0)$ must hit $(1,0)$ or $(0,1)$ beforehand. Thus, $\mc{T}((2,0); \set{(1,0), (0,1)})  \le \tau$ a.s., and therefore $\mc{T}((2,0); \set{(1,0), (0,1)}) \preceq T_0$ (the case $s(t_2)=(0,2)$ follows by symmetry).
\end{enumerate}

In summary, the above argument shows that 
\begin{align*}
  \mc{T}((2,1); \set{(1,0), (0,1)}) \Leq T_1 + T_0. 
\end{align*}
Therefore, $T_1 \Leq A$ with $A$ defined in \eqref{eqn:rv-A-def}.
\end{enumerate}
 \item 
Now we   analyze $T_0$~\eqref{eqn:T0-def}.
Let $\p(0)= (1,0)$,
and let $t_1$ denote the first time at 
which either an arrival or a service completion event occurs.
Then 
$t_1 \deq Y$.
\begin{itemize}
\item      
If the first event is a service completion, which has probability 
$ \frac{\mu}{\lambda + \mu}$, 
then $\p(t_1) = (0,0)$.
\item 
If the first event is an arrival, which has probability $\frac{\lambda}{\lambda + \mu}$, 
then $\p(t_1) = (1,1)$,
and  the time it takes for the queue length vector
$\p(t)$ 
to hit $(0,0)$,
which we denote as $\mc{T}((1,1);(0,0))$, satisfies
\begin{align*}
\mc{T}((1,1);(0,0)) \deq T_1' + T_0'.
\end{align*}
Thus, we have shown
 $T_0 \deq B$
with $B$ defined in \eqref{eqn:rv-B-def}.
\end{itemize}   
\end{enumerate}

Combining $T_1 \Leq A$, and
$T_0 \deq B$,
we arrive at
\begin{align*}
\left\{
\begin{aligned}
& \E[T_1] \leq  & \frac{1}{\lambda + 2\mu} +  \frac{\lambda}{\lambda + 2\mu} \left(\E[T_1]+\E[T_0]\right), \\
& \E[T_0] =  & \frac{1}{\lambda + \mu} +  \frac{\lambda}{\lambda + \mu} \left(\E[T_1]+\E[T_0]\right).  
\end{aligned}
\right.
\end{align*}

It follows that $2\mu \E[T_1] \leq 1+ \lambda \E[T_0] = 1+ \frac{\lambda}{\mu} + \lambda \frac{\lambda}{\mu} \E[T_1]
$ and we know this system is positive recurrent, 
which implies that $\E[T_1] < \infty$ and $\E[T_0] < \infty$.
Solving the above inequality yields~\eqref{eqn:ET1-T0-bound}
 if $\rho < \frac{1}{\sqrt{2}}$.

\subsection{Proof   of Corollary~\ref{coro-moment-difference}}
\label{sec:proof-coro}

To show Corollary~\ref{coro-moment-difference}, we will 
derive an upper bound for
$\E[q_1(t)^2 + q_2(t)^2]$, where $\q(t)$ is the queue length process for a JSQ system.
To do this,
\journal{since under some coupling,
the   queue length vector of a two-queue JSQ
system
can be component-wise upper bounded by 
that of two independent $M/M/1$ systems~\citep{Turner98},}\arxiv{due to Proposition~\ref{prop-JSQ<=MM1},}
it suffices to analyze two independent $M/M/1$ systems.
Thus, in Lemma~\ref{prop-MM1-upper-bound}, we analyze the corresponding $M/M/1$ systems.

\begin{lemma}  \label{prop-MM1-upper-bound}
Consider an $M/M/1$ queue with an arrival rate $\lambda$ and a service rate $\mu$. Let $q(t)$ be its queue length at time $t$ and assume $\rho = \frac{\lambda}{\mu} < 1$.
Let $\pi$ denote its steady state.
Then 
\begin{align*}
\E_\pi[q^2]=
\frac{\rho(1+\rho)}{(1-\rho)^2},
\end{align*}

and 
\begin{align*}
\E[q(t)^2|q(0) = x]  \leq x^2 +  2 x \frac{\rho}{1-\rho}    
+\frac{\rho(1+\rho)}{(1-\rho)^2}.
\end{align*}
\end{lemma}


\begin{proof}[Proof of Lemma~\ref{prop-MM1-upper-bound}]
\label{sec-proof-MM1-upper-bound}
The first equality follows from the fact that  the steady-state 
distribution of an $M/M/1$ system is $\pi(k) = \rho^k (1-\rho)$ so $\E_\pi[q^2]
=\sum_{k=0}^\infty k^2 \rho^k (1-\rho)
= \frac{\rho(1+\rho)}{(1-\rho)^2}
$.

  Let $q_x(t)$ denote the queue length process starting from $q(0) = x \ge 0$, and let $q_0(t)$ denote the  queue length process starting from $q(0) = 0$.
We couple the two processes by using the same arrival and potential 
service processes: both queues receive arrivals 
from a common Poisson process of rate~$\lambda$ and services from a common Poisson process of rate~$\mu$.
Under this coupling, the process starting from $x$ remains below the one starting from $0$ plus $x$, that is,
\begin{align*}
q_x(t) \le q_0(t) + x \text{ almost surely}   \text{ for all } t \ge 0.
\end{align*}
Squaring both sides and taking expectations, we obtain
\begin{align*}
\E[q_x(t)^2]
&\le \E[(q_0(t) + x)^2] \\
&= \E[q_0(t)^2] + 2x\, \E[q_0(t)] + x^2.
\end{align*}
Thus,
\begin{align}
    \E[q(t)^2 \mid q(0) = x]
    &= \E[q_x(t)^2] \nonumber \\
    &\le \E[q_0(t)^2] + 2x\, \E[q_0(t)] + x^2  \nonumber\\
&= \E[q(t)^2 \mid q(0) = 0] + 2x\, \E[q(t) \mid q(0) = 0] + x^2. 
\label{eqn:final-Eq}
\end{align}

In addition, let $q_\pi(t)$
denote the  queue length process starting from $q(0) \sim \pi$. We couple the two processes by using the same arrival and service processes.
Since $q_0(0) \le q_\pi(0)$  almost surely,
\begin{align*}
q_0(t) \le q_\pi(t) \text{ almost surely}   \text{ for all } t \ge 0.
\end{align*}
Thus,
\begin{align}
\E[q(t)^2 \mid q(0) = 0] & \le \E_\pi[q^2]=
\frac{\rho(1+\rho)}{(1-\rho)^2}, \label{eqn:E-qt-2} \\
\E[q(t)  \mid q(0) = 0] & \le \E_\pi[q] = \frac{\rho}{1-\rho}.
\label{eqn:E-qt} 
\end{align}

Combining~\eqref{eqn:E-qt-2},~\eqref{eqn:E-qt}, and~\eqref{eqn:final-Eq}, we have
\begin{align*}
\E[q(t)^2|q(0) = x] \leq x^2 +  2 x 
\frac{\rho}{1-\rho} 
+ \frac{\rho(1+\rho)}{(1-\rho)^2},
\end{align*}
and we  finish the proof.
\end{proof}

Now we are ready to prove  Corollary~\ref{coro-moment-difference}.
\begin{proof}[Proof of Corollary~\ref{coro-moment-difference}]

Fix an integer $R >0$.
Then
\begin{align}
\begin{split}
&|\E[q_1(t)] - \E_\pi[q_1]|    \\ 
& = |\E[q_1(t) \I{q_1(t) \leq R}]
+\E[q_1(t) \I{q_1(t) > R}]
- \E_\pi[q_1 \I{q_1 \leq R}]
-\E_\pi[q_1 \I{q_1 > R}]|   \\
& \leq  |\E[q_1(t) \I{q_1(t) \leq R}] -  \E_\pi[q_1 \I{q_1 \leq R}]| 
+|\E[q_1(t) \I{q_1(t) > R}] -  \E_\pi[q_1 \I{q_1 > R}]|  \\
&  \stackrel{(a)}{=}  \left|\sum_{m=0}^{R-1} \P{m < q_1(t) \le R}  - \Ppi{m < q_1 \le R}\right| 
+|\E[q_1(t) \I{q_1(t) > R}] -  \E_\pi[q_1 \I{q_1 > R}]|  \\
& \stackrel{(b)}{\le}
\sum_{m=0}^{R-1} 
|\P{m < q_1(t) \le R}
-\Ppi{m < q_1 \le R}| + \frac{1}{R} \Big(\E[q_1(t)^2 \I{q_1(t) > R}] +  \E_\pi[(q_1)^2 \I{q_1 > R}] \Big) \\
& \stackrel{(c)}{\le} R \cdot \TV{\mathbb{P}_{\x}^t}{\pi} + \frac{1}{R}  \Big(\E[q_1(t)^2] +  \E_\pi[(q_1)^2] \Big).
\end{split} \label{eqn:coro-imp-eq}
\end{align} 
In~\eqref{eqn:coro-imp-eq},
$(a)$ follows from the 
equality 
$\E[X \I{X \le a}] = \sum_{m=0}^{a-1} \P{m < X \le a}$
for any non-negative integer-valued random variable $X$  and $a \in \mathbb{N}_+$, 
$(b)$ follows from the triangle inequality, 
the fact that 
$x \I{x \ge R} \le \frac{x^2}{R} \I{x \ge R}$ for any $x \ge 0$, and $|x-y| \le x+ y$ for any $x,y \ge 0$,
and
$(c)$ follows from 
$|\mathbb{P} (A) - \mathbb{Q}(A)| \le \TV{\mathbb{P}}{\mathbb{Q}}$  with $A =
\set{(q_1,q_2): m < q_1 \le R}$. 
Similar to~\eqref{eqn:coro-imp-eq}, we have
\begin{align*}
|\E[q_2(t)] - \E_\pi[q_2]| \leq  R  \TV{\mathbb{P}_{\x}^t}{\pi} + \frac{1}{R}  \Big(\E[q_2(t)^2] +  \E_\pi[(q_2)^2] \Big).
\end{align*} 

The triangle inequality yields 

\begin{align*}
|\E[q_1(t)  + q_2(t)] - \E_\pi[q_1 + q_2]| \leq 
2R  \TV{\mathbb{P}_{\x}^t}{\pi}  + \frac{1}{R}   \Big(\E[q_1(t)^2 + q_2(t)^2] +  \E_\pi[(q_1)^2 + (q_2)^2] \Big). 
\end{align*}

Since $\rho = \frac{\lambda}{2\mu}$,
then by Lemma~\ref{prop-MM1-upper-bound} and  
Proposition~\ref{prop-JSQ<=MM1},
$\E_\pi[(q_1)^2+(q_2)^2] \leq \frac{2\rho(1+\rho)}{(1-\rho)^2}$, and 
\begin{align*}
\E[q_1(t)^2 + q_2(t)^2]  
\le
    (x_1^2+ x_2^2)  + 2(x_1+x_2)  
\frac{\rho}{1-\rho}
+ \frac{2\rho(1+\rho)}{(1-\rho)^2}. 
\end{align*} 
Taking $R = \ceil{\sqrt{t}} \le \sqrt{t} + 1$ yields
\begin{align*} 
 |\E[q_1(t) + q_2(t)] - \E_\pi[q_1 + q_2]|  
 \leq 
2 (\sqrt{t}+1) \TV{\mathbb{P}_{\x}^t}{\pi} +
\frac{1}{\sqrt{t}} 
C_2(\lambda,\mu,\x).
\end{align*}

By Theorem~\ref{thm:main}, $\TV{\mathbb{P}_{\x}^t}{\pi} \leq \frac{1}{t}  C_1^K(\lambda,\mu,\x)$, where $C_1^K$ is defined 
in~\eqref{eq:definition of C1-K}.

Therefore,  we conclude that

\begin{align*}
|\E[q_1(t)+q_2(t)] - \E_\pi[q_1 + q_2]| \leq \frac{1}{\sqrt{t}}  \Big(2C_1^K(\lambda,\mu,\x) + C_2(\lambda,\mu,\x)
\Big) + \frac{2}{t} C_1^K(\lambda,\mu,\x).
\end{align*}  
Using Proposition~\ref{prop-JSQ-stationary-bound}, we finish proving~\eqref{eqn:moment-bound}.
\end{proof}

\section{Conclusion and future work}
\label{sec:conclusion}

In this paper, we present a convergence rate result for a load balancing system with two queues under the JSQ policy,
which is valid for any $\rho \in (0,1)$.  
Our method is based on a 
novel  and simple
coupling construction 
and on computing an upper bound on the expected hitting time. 
Instead of relying on approximations of the stochastic systems in relevant limits, 
we directly analyze the original system, yielding clean insights regarding the convergence rate and how it depends on $\rho$.

Below, we discuss some interesting future directions.
Recall that we have established a convergence rate that is of order  
$\tvratet$. However, based on the results for a single-server queue \cite{Robert13} 
and also based on the scaling used in diffusion approximations, 
one expects an exponential convergence rate of the form $O\left( e^{-\alpha \epsilon^2 t}\right)$ 
with $\epsilon = 1- \rho$ for 
an appropriate constant $\alpha$. 
Establishing such a strong bound on convergence is a future research direction, and we believe
it requires adopting novel methodological approaches, particularly by exploiting
state space collapse results from the literature.
Another possible approach is to compute an %
upper bound on the moment generating function of the hitting time.
An alternative direction
is to extend our analysis to a system with $n \ge 3$
servers and heterogeneous service rates. 
We also hope the technique
we have developed in this paper
can spur further analysis
of more 
complicated
stochastic  systems.

\section*{Acknowledgments}

This work was partially supported by NSF grants EPCN-2144316 and CMMI-2140534.

\bibliographystyle{abbrvnat}
\bibliography{references}   


\end{document}